\documentclass[a4paper,12pt]{article}
\usepackage{amssymb}
\usepackage{amsmath}
\usepackage{amsthm}
\usepackage{xcolor}
\usepackage{hyperref}
\usepackage{tikz}
\usetikzlibrary{calc}
\usetikzlibrary{decorations.pathreplacing}
\hypersetup{
	colorlinks,
	linkcolor={red!60!black},
	citecolor={green!60!black},
	urlcolor={blue!60!black}
}

\usepackage{geometry}
\newtheorem{Theorem} {Theorem} [section]
\newtheorem{Proposition} [Theorem] {Proposition}
\newtheorem{Lemma} [Theorem] {Lemma}
\newtheorem{Remark} [Theorem] {Remark}
\newtheorem{Definition} [Theorem] {Definition}

\newcommand{\PG}{\mathrm{PG}}

\title{Optimally pseudorandom $K_4$-free graphs}
\author{\renewcommand\thefootnote{\alph{footnote}}
Jie Han\footnotemark[1] \and
\renewcommand\thefootnote{\alph{footnote}}
Ferdinand Ihringer\footnotemark[2] \and
\renewcommand\thefootnote{\alph{footnote}}
Hendrik Van Maldeghem\footnotemark[3]}

\date{26 Sep 2026}

\begin{document}
\maketitle

{\renewcommand\thefootnote{\alph{footnote}}

\footnotetext[1]{School of Mathematics and Statistics,
Beijing Institute of Technology, China.
E-mail: {\tt han.jie@bit.edu.cn}}

\footnotetext[2]{Dept.~of Mathematics,
Southern University of Science and Technology, Shenzhen, Guangdong, China.
E-mail: {\tt ihringer@sustech.edu.cn}}

\footnotetext[3]{Department of Mathematics, Computer Science and Statistics,
Ghent University, Krijgslaan 299-S9, B-9000 Ghent, Belgium.
E-mail: {\tt Hendrik.VanMaldeghem@UGent.be}}}

\begin{abstract}
  We show that optimally pseudorandom $K_4$-free graphs of order $n$ and degree $d = \Theta(n^{4/5})$ exist
  by constructing a graph in the split Cayley hexagon, matching the known upper bound.
  This resolves the first open case for $K_k$-free graphs after $k=3$
  for which Alon gave a tight construction in 1994.

  This has a variety of implications for $K_4$-free pseudorandom graphs.
  Furthermore, it implies an explicit lower bound on the Ramsey number of $r(4, t) \geq t^{1.\overline{6} - o(1)}$,
  improving the previous record by Kostochka, Pudlák, and Rödl of $r(4, t) \geq t^{1.6-o(1)}$.
%
\end{abstract}


\section{Introduction}

Pseudorandom graphs are deterministic graphs that mimic the behavior of random graphs.
We refer to the survey by Krivelevich and Sudakov for background and applications \cite{KrivelevichSudakov06}.
One central notion of pseudorandomness is that of $(n, d, \lambda)$-graphs.
A graph is called an \textit{$(n, d, \lambda)$-graph} if it has order $n$, is $d$-regular,
and its second largest eigenvalue of its adjacency matrix
in absolute value is at most $\lambda$. If $\lambda = O(\sqrt{d})$,
then we call a graph \textit{optimally pseudorandom}.

A simple application of the expander-mixing lemma (cf.\ \cite{KrivelevichSudakov06}) shows that any $K_k$-free
optimally pseudorandom $(n, d, \lambda)$-graph satisfies
\[
  d = O\left( n^{1 - \frac{1}{2k-3}} \right).
\]
So far this bound has only been known to be tight for $k=3$ due to a construction by Alon \cite{Alon94}.
For general $k$, the best known construction is due to Bishnoi, Ihringer, and Pepe \cite{BishnoiIhringerPepe20},
giving an example with
\[
 d = \Theta\left( n^{1 - \frac{1}{k-1}} \right).
\]
Here we resolve the problem for $k=4$, the first open case, using a graph constructed on the split Cayley hexagon.

\begin{Theorem}\label{thm:main}
 There exists an optimally pseudorandom $K_4$-free $(n, d, \lambda)$-graph with $d = \Theta\left( n^{4/5} \right)$.
\end{Theorem}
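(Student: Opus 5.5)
The plan is to build the graph on the point set of the split Cayley hexagon $H(q)$. We regard these points as all $n=(q^6-1)/(q-1)=\Theta(q^5)$ points of the parabolic quadric $Q(6,q)$ in $\PG(6,q)$, and aim for degree $d=\Theta(q^4)$ and $\lambda=O(q^2)$.

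\textbf{The base graph.} Take $\Gamma_2$, the graph in which two points are adjacent when they are at distance $2$ in the collinearity graph $\Gamma$ of $H(q)$. Its degree is $q^3(q+1)$. Two facts make it a good starting point.
\begin{itemize}
\item $\Gamma$ is distance-regular with $s=t=q$, so its eigenvalues are $q^2+q$, $2q-1$, $-1$ and $-q-1$.
\item Since $c_2=1$, we have $A_2=A^2-(q-1)A-q(q+1)I$.
\end{itemize}
Substituting each eigenvalue of $A$ gives the nontrivial eigenvalues of $A_2$, namely $q^2-2q$, $-q^2$ and $q^2+q$. All of these are $O(q^2)=O(\sqrt d)$, so $\Gamma_2$ is already an optimally pseudorandom graph with $d=\Theta(n^{4/5})$.

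\textbf{Locating the $K_4$'s.} $\Gamma_2$ is not $K_4$-free. Any two points on distinct hexagon lines through a common point $x$ are at distance $2$, so the ideal plane $x^{\perp_H}$ (the union of the $q+1$ hexagon lines through $x$) contains a complete multipartite graph $K_{q+1\times q}$. On the other hand, four pairwise adjacent vertices of $\Gamma_2$ are pairwise perpendicular in $Q(6,q)$. They therefore span a totally singular subspace, which lies in a plane of $Q(6,q)$.

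So the second step classifies the totally singular planes relative to $H(q)$. A plane is either an ideal plane $x^{\perp_H}$, or it contains exactly one hexagon line $L$. In the second case I would check that the four points cannot be pairwise at distance exactly $2$: points off $L$ lie at distance $\le 2$ from $L$ in a way that forces distance $1$ or $3$ for some pair. Hence every $K_4$ of $\Gamma_2$ lies inside a single ideal plane, and inside it uses points on four distinct lines through the centre $x$.

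\textbf{Sparsifying locally.} Write $A_2=\sum_x B_x$, where $B_x$ is the adjacency matrix of the complete multipartite graph on $x^{\perp_H}\setminus\{x\}$ whose parts are the hexagon lines through $x$. Each edge of $\Gamma_2$ lies in exactly one $B_x$, because the midpoint is unique when $c_2=1$. I would replace each $B_x$ by $B'_x$, which keeps only those pairs $y,z$ whose lines $xy,xz$ are adjacent in a fixed bipartite (hence $K_3$-free) graph $F$ on the $q+1$ lines through $x$. A $K_4$ in this sparser graph would still sit in one ideal plane, so it would need a triangle in $F$, which is impossible. The graph $F$ should be transported coherently via the automorphism group $G_2(q)$ or via coordinates; for example, pick a point $p$ and split lines by a quadratic character of their coordinate on the pencil.

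\textbf{Main obstacle: the spectrum after sparsification.} This is where I expect the real difficulty. I would write $\sum_x B'_x = N\,\mathrm{diag}(\ldots)\,N^{T}$-type expressions using the point–line incidence matrix. The goal is to show that the sparsified graph differs from $\tfrac12 A_2$ by a matrix of norm $O(q^2)$. If an arbitrary local choice of $F$ fails, I would instead choose $F$ algebraically so that the new adjacency matrix lies in a coherent configuration refining the hexagon's association scheme. Its eigenvalues could then be computed explicitly, and one would verify that they remain $O(q^2)$.
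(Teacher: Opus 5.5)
\paragraph{The $K_4$ step fails.} Your classification of totally singular planes is false, and the gap cannot be repaired on the point side of $H(q)$.

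Through a hexagon line $L$ there are exactly $q+1$ totally singular planes of $Q(6,q)$, one for each point of the conic in $L^\perp/L$. The $q+1$ distinct ideal planes $x^{\perp_H}$, $x\in L$, already exhaust them. So every non-ideal plane $\pi$ contains \emph{no} hexagon line, and there are $q^3(q+1)(q^2-q+1)$ such planes.

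Two points of $\pi$ are collinear on $Q(6,q)$, hence not opposite in $H(q)$. They are also not collinear in $H(q)$, because the joining hexagon line would lie in $\pi$. So $\pi$ is a clique of size $q^2+q+1$ in your $\Gamma_2$; this in fact meets the Delsarte bound $1-k/\lambda_{\min}$.

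Now take four points of $\pi$, no three on a line of $\pi$. Suppose two of the six pairs had a common center $c$. Then three non-collinear points of $\pi$ would lie in the plane $c^{\perp_H}$, forcing $\pi=c^{\perp_H}$. So the six centers are distinct, which is exactly a Kantor configuration.

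Your sparsification only destroys $K_4$'s in which some center is shared, using triangle-freeness of a single $F_c$. For these $K_4$'s the six edges are governed by six different local graphs $F_c$, and nothing in your argument excludes them. A warning sign is that your proof never uses any condition on $q$.

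\paragraph{How the paper avoids this.} Your $\Gamma_2$ has the same spectrum as the paper's $\Gamma_4$, but it lives on the wrong side of the duality. The paper works in $H(q)^D$, i.e.\ with the \emph{lines} of $H(q)$ as vertices, and takes $q\equiv 2\pmod 3$. There Kantor configurations do not exist (Proposition~\ref{prop:noKantor}), so the two-coloring only has to kill common-center triangles. It does so because three lines through one center cannot receive pairwise distinct colors.

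\paragraph{The spectral step is also missing.} You left the bound on $\lambda$ open, and the paper's choice of coloring is the idea that settles it.
\begin{enumerate}
\item Fix $P_\infty$ and restrict to the $q^5$ points $X$ opposite it.
\item Give every line through a point of $X$ the color of its unique nearest line through $P_\infty$ (Lemma~\ref{lem:nearestln}).
\item This coloring does not depend on the center, and every point of $X$ lies on $\lfloor (q+1)/2\rfloor$ red lines. With $C,D$ the red and blue collinearity matrices on $X$, one gets exactly $A=CD+DC$.
\item The lines nearest to a fixed $L_i$ partition $X$ into $q$-sets. Hence $C+qI\succeq 0$ and $D+qI\succeq 0$.
\item $C+D$ is the collinearity matrix on $X$, whose nonprincipal eigenvalues are $O(q)$ by interlacing. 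Therefore $\|C\|,\|D\|=O(q)$ on $\mathbf{1}^\perp$, and $\lambda=O(q^2)$ follows.
\end{enumerate}
Your idea of splitting lines by a quadratic character on the pencil of a fixed point is close to this. However, without a center-independent coloring with regular color classes you have no factorization of $A$, and hence no control of the spectrum.
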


The existence of this graph had been conjectured in several places, for instance, see \cite{FLS13,KSS04,SSV05}.
Due to a result by Mubayi and Verstra\"ete \cite{MubayiVerstraete24}, Theorem \ref{thm:main} implies $r(4, t) = \Omega(t^3 / (\log t)^4)$,
giving a third construction for this lower bound following Mattheus and Verstra\"ete in 2024 \cite{MattheusVerstraete24}, and Brada\v{c} in 2026 \cite{Bradac26}.
Another related problem is the MaxCut problem for $H$-free graphs \cite{AKS05}.
In this area, Theorem \ref{thm:main} answers the third question in \cite[\S6]{GJS23} for $r=4$, namely,
showing that the minimum surplus of a $K_4$-free graph with $m$ edges is $O(m^{7/9})$.

A $d$-regular graph of order $n$ is called \textit{$(p, \beta)$-bijumbled} if, for all subsets of vertices $X, Y$,
\[
 |E(X, Y) - p |X| |Y|| \leq \beta \sqrt{|X||Y|},
\]
where $E(X, Y)$ denotes the number of edges between $X$ and $Y$, and $p = d/n$ denotes the \textit{edge density}.
The expander-mixing lemma implies that the graph in Theorem \ref{thm:main} is $(p, Cp^3n)$-bijumbled.
Thus, Theorem \ref{thm:main} also shows that the condition $\beta = o(p^{3}n)$ that, by Theorem 1.4 in \cite{Morris25},
guarantees the existence of a $K_4$-factor in $(p, \beta)$-bijumbled graphs, is tight.
See the discussion in the introduction there \cite{Morris25}. Also see \cite[\S1.2]{CFZ14}.

An easy application of the expander-mixing lemma shows that an optimally pseudorandom graph with parameters as in Theorem \ref{thm:main}
has independence number at most $n\lambda/d = O( n^{3/5})$, providing an explicit witness
for $r(4, t) \geq t^{5/3-o(1)}$.

\begin{Theorem}
 There exists an explicit construction for $r(4, t) \geq t^{1.\overline{6}-o(1)}$.
\end{Theorem}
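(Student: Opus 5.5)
The plan is to derive the statement from Theorem~\ref{thm:main}, taking that construction as given and checking that it is explicit. Let $G$ be the optimally pseudorandom $K_4$-free $(n,d,\lambda)$-graph of Theorem~\ref{thm:main}. It has $d = \Theta(n^{4/5})$ and $\lambda \le C\sqrt{d}$ for an absolute constant $C$, and it is constructed explicitly on the split Cayley hexagon $H(q)$ for prime powers $q$. Since $G$ is $K_4$-free, any $r(4,t)$ lower bound comes from bounding its independence number $\alpha(G)$.

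\textbf{Step 1: independence number.} I would use the expander-mixing lemma with $X = Y = I$ for an independent set $I$. Then $E(I,I) = 0$, so $p|I|^2 \le \lambda |I|$ with $p = d/n$, and hence $|I| \le n\lambda/d$. A sharper Hoffman-type ratio bound would also work, but this suffices. Substituting the parameters gives
\[
\alpha(G) \le \frac{n\lambda}{d} = O\!\left(\frac{n}{\sqrt d}\right) = O\!\left(n^{1-2/5}\right) = O\!\left(n^{3/5}\right).
\]

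\textbf{Step 2: translation to Ramsey numbers.} Put $t = \lceil C' n^{3/5}\rceil + 1$, with $C'$ the implied constant from Step 1. Then $G$ contains neither $K_4$ nor an independent set of size $t$, so $r(4,t) > n = \Omega(t^{5/3})$. The construction only exists for $n$ in the family $n = n(q)$, and $n(q)$ is polynomial in $q$ (I expect $n \asymp q^5$ or similar, with $d \asymp q^4$). To cover every $t$, I would use Bertrand's postulate: it gives a prime $q$ within a factor $2$ of any target value, which changes $n$ only by a constant factor. Monotonicity of $r(4,t)$ in $t$ then gives $r(4,t) \ge c\, t^{5/3}$ for all large $t$. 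This is stronger than $t^{1.\overline{6}-o(1)}$, and the $o(1)$ in the statement absorbs any remaining slack.

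\textbf{Step 3: explicitness.} This is the only step that needs real care, and I expect it to be the main obstacle because it depends on how Theorem~\ref{thm:main} is actually proved. The construction uses the split Cayley hexagon, which has a concrete coordinatization: points of the parabolic quadric $Q(6,q)$, with lines given by explicit equations, or equivalently the $G_2(q)$ description. From this the vertex set and adjacency relation can be written down and computed in time polynomial in $n$, once a prime $q$ is fixed (a prime in the required range can be found deterministically in polynomial time). I would therefore check that every choice made in the proof of Theorem~\ref{thm:main} is deterministic, in particular the selection of the vertex set and the adjacency rule inside the hexagon. If that proof made a random or non-constructive choice, such as picking a random subset, this is where the argument would need repair.
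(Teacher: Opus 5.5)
Your route is the paper's. The paper's whole argument is that, by the expander-mixing lemma, the graph $\Gamma_4^X$ of Theorem~\ref{thm:main} has independence number at most $n\lambda/d=O(q^3)=O(n^{3/5})$, since $n=q^5$, $d=\Theta(q^4)$ and $\lambda=O(q^2)$. Since $\Gamma_4^X$ is $K_4$-free and explicit, this gives the bound. Your Steps~1 and~3 are exactly this. The concern in Step~3 is settled by the construction itself: the only choices are the point $P_\infty$ and a fixed but arbitrary red/blue split of the $q+1$ lines through it. The random-bipartition variant mentioned in the Remark is deliberately not used.

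The step that fails as written is the interpolation in Step~2. The graph is not shown to be $K_4$-free for every prime power $q$. $K_4$-freeness comes from Proposition~\ref{prop:noKantor}, which requires $q\equiv 2 \pmod{3}$. Bertrand's postulate only gives some prime in $[x,2x]$, and that prime may be $\equiv 1 \pmod 3$ or equal to $3$. For such $q$ nothing guarantees that $\Gamma_4^X$ has no $K_4$, so you must interpolate within the admissible family.

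The simplest fix is to take $q=2^h$ with $h$ odd. Then $q\equiv 2\pmod 3$, and consecutive admissible values of $q$ differ by a factor $4$. Hence consecutive values of $n=q^5$ differ by the constant factor $4^5$. Also, $\mathbb{F}_{2^h}$ can be built in time polynomial in $n$, for instance by brute-force search for an irreducible polynomial of degree $h$ over $\mathbb{F}_2$ among at most $2^h=q$ candidates. Alternatively, use a Bertrand-type theorem for primes $\equiv 2 \pmod 3$ (Breusch), or the prime number theorem for arithmetic progressions.

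With this repair your argument gives $r(4,t)\ge c\,t^{5/3}$ for all large $t$, which implies the stated $t^{1.\overline{6}-o(1)}$. The paper does not spell out this interpolation step at all.
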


This improves slightly upon the previous best
explicit construction due to Kostochka, Pudl\'ak, and R\"odl who
obtained $r(4, t) \geq t^{1.6 - o(1)}$ \cite{KostochkaPudlakRodl10}.
Interestingly, this construction utilized a generalized quadrangle.
See \cite{IhringerMattheus26} for a brief survey together with
the currently best known general bounds.

\paragraph*{AI Declaration}

For $q=2^h$, $h$ odd, the construction was found by the first author using ChatGPT Astra 6.
The authors subsequently developed, refined, and independently verified
all arguments and take full responsibility for the contents of the paper.
The second author used the ChatGPT 5.6 Chat on High to generate the figures
\ref{fig:hex}, \ref{fig:K4}, and \ref{fig:kantor} based on a text prompt,
and Figure \ref{fig:cnt} based on a drawing.

\section{The construction}

A \textit{generalized hexagon} of order $(s, t)$ is a point-line incidence structure
with the following properties:
\begin{enumerate}
 \item Any point lies on exactly $t+1$ lines.
 \item Any line contains exactly $s+1$ points.
 \item The diameter of the incidence graph is $6$.
 \item The girth of the incidence graph is $12$.
\end{enumerate}
Here the \textit{incidence graph} is the bipartite graph on points and lines
with a point and a line adjacent when incident. See \cite{HVM} for further reading.
The generalized hexagon of order $(1,1)$ is an ordinary hexagon (Fig.\ \ref{fig:hex}).
The incidence graph of a projective plane $\PG(2, q)$ is a generalized hexagon of order $(1, q)$
if we regard its $2(q^2+q+1)$ vertices as points and its $(q^2+q+1)(q+1)$ edges as lines.

\begin{figure}[ht!]
\begin{center}
\begin{tikzpicture}[scale=.85, every node/.style={font=\small}]
\foreach \i in {0,...,5} {
  \coordinate (v\i) at ({90+60*\i}:1.35);
  \fill (v\i) circle (2pt);
}
\draw (v0)--(v1)--(v2)--(v3)--(v4)--(v5)--cycle;
\end{tikzpicture}
\end{center}

\caption{The generalized hexagon of order $(1,1)$.}

\label{fig:hex}
\end{figure}

As a warm-up, let us count the number of points.
It is easy to count the number of points in a generalized hexagon of order $(s, t)$:
Fix a line $L$. Then $L$ itself has $s+1$ points. Each point on $L$ lies on $t$ more lines,
each of them containing $s$ points. This gives $(s+1)st$ more points.
Now, again, each such point lies on $t$ additional lines,
each of which contains $s$ more points. This gives another $(s+1)s^2t^2$ points. See Fig.\ \ref{fig:cnt}.
Note that the points that we just counted have distance $1$, $3$ and $5$ from $L$
in the incidence graph. Thus, as the girth is $12$, we have not counted any point twice.
The diameter being $6$ implies that we have counted all points. In total, we obtain
that we have
\begin{align}
 (s+1)(1+st+s^2t^2) \label{eq:pts}
\end{align}
points. If we instead fix a point $P$, then we find analogously $(t+1)s$ points
at distance $2$ from $P$ and $(t+1)s^2t$ points at distance $4$ from $P$.
By \eqref{eq:pts}, the number of points at distance $6$ from $P$ is
\begin{align}
 s^3t^2. \label{eq:pts6}
\end{align}

\begin{figure}[ht!]
\begin{center}
\begin{tikzpicture}[scale=.95, every node/.style={font=\small}]

\begin{scope}[xshift=-5.4cm]

\coordinate (A) at (0,0);
\coordinate (B) at (2.3,0);

\fill (A) circle (2.3pt);
\fill (B) circle (2.3pt);

\draw[line width=1.25pt] (0,-1.9)--(0,0.2);
\node[below] at (0,-1.9) {$L$};

\draw[line width=1.15pt] (A)--(2.55,0.45);
\draw[line width=1.15pt] (A)--(B);
\draw[line width=1.15pt] (A)--(2.55,-0.20);
\draw[line width=1.15pt] (A)--(2.55,-0.60);

\draw[line width=1.15pt] (B)--(4.95,0.45);
\draw[line width=1.15pt] (B)--(4.95,0.10);
\draw[line width=1.15pt] (B)--(4.95,-0.25);
\draw[line width=1.15pt] (B)--(4.95,-0.70);

\draw[decorate,decoration={brace,amplitude=5pt},line width=.9pt]
  (0.15,0.95)--(2.15,0.95)
  node[midway,above=6pt] {$(s+1)st$};

\draw[decorate,decoration={brace,amplitude=5pt},line width=.9pt]
  (2.45,0.95)--(4.9,0.95)
  node[midway,above=6pt] {$(s+1)s^2t^2$};

\draw[decorate,decoration={brace,amplitude=5pt},line width=.9pt]
  (-0.55,-1.85)--(-0.55,0.10)
  node[midway,left=6pt] {$s+1$};

\node at (2.35,-2.45) {Fix a line $L$.};

\end{scope}

\begin{scope}[xshift=1.9cm]

\coordinate (P) at (0,0);
\coordinate (Q) at (2.3,0);

\fill (P) circle (2.3pt);
\fill (Q) circle (2.3pt);

\node[left] at (P) {$P$};

\draw[line width=1.15pt] (P)--(2.55,0.85);
\draw[line width=1.15pt] (P)--(2.55,0.25);
\draw[line width=1.15pt] (P)--(Q);
\draw[line width=1.15pt] (P)--(2.55,-0.75);

\draw[line width=1.15pt] (Q)--(4.95,0.85);
\draw[line width=1.15pt] (Q)--(4.95,0.20);
\draw[line width=1.15pt] (Q)--(4.95,-0.35);
\draw[line width=1.15pt] (Q)--(4.95,-0.75);

\draw[decorate,decoration={brace,amplitude=5pt},line width=.9pt]
  (0.15,1.35)--(2.15,1.35)
  node[midway,above=6pt] {$(t+1)s$};

\draw[decorate,decoration={brace,amplitude=5pt},line width=.9pt]
  (2.45,1.35)--(4.9,1.35)
  node[midway,above=6pt] {$(t+1)s^2t$};

\node at (2.35,-2.45) {Fix a point $P$.};

\end{scope}

\end{tikzpicture}
\end{center}

\caption{Counting points in a generalized hexagon.}

 \label{fig:cnt}
\end{figure}

\begin{Lemma}\label{lem:path}
	Let $x,y$ be vertices of the incidence graph with distance at most $5$.
	Then there exists a unique shortest path from $x$ to $y$.
\end{Lemma}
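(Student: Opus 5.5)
The plan is to argue by contradiction using only the girth condition: the incidence graph has girth $12$. Suppose $x$ and $y$ have distance $k \leq 5$, and that there are two distinct shortest paths $x = u_0, u_1, \dots, u_k = y$ and $x = w_0, w_1, \dots, w_k = y$. The goal is to extract from these two paths a cycle of length at most $2k \leq 10 < 12$, contradicting girth $12$.

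First, let $i$ be the smallest index with $u_{i+1} \neq w_{i+1}$. Such an $i$ exists because the paths are distinct and both start at $x$; also $i < k-1$, since the paths agree at index $k$. Next, let $j > i$ be the smallest index with $u_j = w_j$; it exists because $u_k = w_k$. Two shortest paths from $x$ pass through any given vertex at the same index, namely its distance from $x$. So the portions $u_i, u_{i+1}, \dots, u_j$ and $w_i, w_{i+1}, \dots, w_j$ share only their endpoints: any common interior vertex would be some $u_\ell = w_\ell$ with $i < \ell < j$, contradicting the minimality of $j$.

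Concatenating the first portion with the reverse of the second therefore gives a closed walk of length $2(j-i)$ whose only repeated vertex is the start/end point. Since $j - i \geq 2$, it has at least $4$ vertices and is a genuine cycle. Its length satisfies $2(j-i) \leq 2k \leq 10$. This contradicts the girth being $12$, so the shortest path is unique.

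The only delicate point is making sure the constructed closed walk is an honest cycle and not a degenerate back-and-forth walk. The index argument above handles this: vertices on shortest paths from $x$ are indexed by their distance to $x$, so two such paths can only meet at equal indices. Note that the bound $k \leq 5$ is sharp for this argument. At distance $6$, two paths can form a $12$-cycle, so uniqueness genuinely fails there, which is consistent with the statement.
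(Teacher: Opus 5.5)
Your proof is correct and follows the same approach as the paper: two distinct shortest paths of length at most $5$ yield a cycle of length at most $10$, contradicting girth $12$. Your index argument (indices on shortest paths from $x$ equal distances from $x$) supplies the detail the paper leaves implicit, namely that the union of the two paths really contains a genuine cycle of length at most $2k \leq 10$.
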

\begin{proof}
 Suppose for a contradiction that there are two shortest paths.
 Then the union of these two paths contains a cycle of length at most $10$,
 contradicting that the girth is $12$.
\end{proof}

\begin{Lemma}\label{lem:nearestln}
  Let $P$ be a point and $M$ a line at distance $5$ from $P$.
  Then there exists a unique line $L$ through $P$ that is closest to $M$.
\end{Lemma}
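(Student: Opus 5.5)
We plan to read the statement off the bipartite structure of the incidence graph together with Lemma \ref{lem:path}. Since points and lines lie in different classes of a bipartite graph, the distance from a point $P$ to a line through $P$ is $1$. So for every line $L$ through $P$ we have $d(L,M)\in\{4,6\}$: it is even, it is at least $5-1=4$, and it is at most $5+1=6$. "Closest to $M$" therefore means a line $L$ through $P$ with $d(L,M)=4$. We must show that exactly one such line exists.

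For existence, apply Lemma \ref{lem:path} to $P$ and $M$, whose distance is $5$. This gives a unique shortest path $P=x_0,x_1,\dots,x_5=M$. Here $x_1$ is a line through $P$, and the subpath $x_1,\dots,x_5$ shows that $d(x_1,M)\le 4$. Combined with the parity observation, this gives $d(x_1,M)=4$. So $L=x_1$ is a line through $P$ at distance $4$ from $M$.

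For uniqueness, suppose $L'\neq L$ is another line through $P$ with $d(L',M)=4$. Choose a path of length $4$ from $L'$ to $M$ and prepend the edge $P L'$. The result is a walk of length $5$ from $P$ to $M$. Since $d(P,M)=5$, this walk is a shortest path. Its second vertex is $L'\neq x_1$, so it is different from the path found above. This contradicts the uniqueness in Lemma \ref{lem:path}.

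We do not expect a real obstacle. The only point needing care is the parity argument, which guarantees that every line through $P$ has distance $4$ or $6$ from $M$. This is what makes "closest" a well-defined, unique choice, and not just a statement about minimizing distance. Alternatively, one can argue uniqueness directly from girth: two distinct lines $L,L'$ through $P$, each at distance $4$ from $M$, would create a closed walk of length at most $1+4+4+1=10$ through $P$. Because $d(P,M)=5$, this walk contains a cycle of length at most $10$, contradicting girth $12$. We will use the route through Lemma \ref{lem:path}, since it is cleaner.
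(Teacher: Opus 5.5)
Your proof is correct and follows the paper's argument. Uniqueness is done exactly as in the paper: a second line $L'$ through $P$ at distance $4$ from $M$ yields a second shortest $P$--$M$ path, contradicting Lemma~\ref{lem:path}. For existence, you take the first line on a shortest path, whereas the paper argues by contradiction that if every line through $P$ were at distance at least $6$ from $M$, then $P$ would be at distance at least $7$ from $M$; this is the same observation phrased contrapositively.
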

\begin{proof}
 If there is no line $L$ through $P$ that has distance $4$ from $M$,
 then $M$ has distance at least $6$ from each line through $P$ and, thus,
 at least distance $7$ from $P$, impossible.
 If there is a second line $L'$ through $P$ that has distance $4$ from $M$,
 then there are two shortest paths from $M$ to $P$, contradicting Lemma \ref{lem:path}.
\end{proof}

Let $q$ be a prime power and let $H(q)^D$ denote the \textit{dual split Cayley hexagon},
see \cite[\S3.5.1]{HVM}. This is a generalized hexagon of order $(q,q)$.
Let $\Gamma$ denote the incidence graph of $H(q)^D$.
We provide a brief introduction in Appendix \ref{app:A}.

The distance-$2$ graph $\Gamma_2$ of $\Gamma$, restricted to points, has spectrum
\begin{align}
 q(q+1), \qquad 2q-1, \qquad -1, \qquad -(q+1)\label{eq:spA2}
\end{align}
with multiplicities $1$, $\frac16 q(q+1)^2(q^2+q+1)$, $\frac12 q(q+1)^2(q^2-q+1)$, $\frac13 q(q^4+q^2+1)$, see \cite[Section~6.5]{BCN}.
Similarly, the distance-$4$ graph $\Gamma_4$ has spectrum
\begin{align}
 q^3(q+1), \qquad q(q-2), \qquad -q^2, \qquad q(q+1).\label{eq:spA4}
\end{align}
Next we will define a graph obtained from $\Gamma_4$
by passing to an affine part of it and deleting about half of all the edges.

Fix a point $P_\infty$ and let $X$ denote the points at distance $6$ from $P_\infty$
in $\Gamma$.
Let $L_1, \ldots, L_{q+1}$ denote the lines through $P_\infty$.
Arbitrarily, color $\lfloor (q+1)/2 \rfloor$ of the lines $L_i$ red,
and the remaining $\lceil (q+1)/2 \rceil$ lines $L_i$ blue.
Any line $M_P$ through a point $P$ in $X$,
by Lemma \ref{lem:nearestln}, has a unique line $L_i$ closest to it.
Say that $M_P$ has the color of $L_i$.
Also note that each line through $P$ is closest to a different line $L_i$.
Furthermore, any two points $P$ and $Q$ at distance $4$ are connected
by a unique shortest path, say, $P$-$M_P$-$Z$-$M_Q$-$Q$.

\begin{Definition}
  Define the graph $\Gamma_4^X$ on the vertex set $X$ with two points $P, Q$ adjacent
  if they are adjacent in $\Gamma_4$, and $M_P$ and $M_Q$ have different colors.
\end{Definition}

\begin{Remark}
\begin{enumerate}
 \item
 If one is willing to use the pseudorandomness from random bipartitions
 as, e.g., Conlon in \cite{Conlon17}, then in our case the construction
 can be further simplified
 by simply taking the distance-$4$ graph on points.
 The above choice of $\Gamma_4^X$ lets us control the spectrum quite precisely.
 \item Equivalently, everything can be formulated for lines of $H(q)$.
 We decided to stick to $H(q)^D$ as it seems more natural to think of points
 as vertices.
\end{enumerate}
\end{Remark}

\section{The spectrum}

For any $q$, the graphs are optimally pseudorandom.

\begin{Proposition}\label{prop:spec}
 The graph $\Gamma_4^X$ is an $(n, d, \lambda)$ graph with
 \begin{align*}
  &n = q^5, && d = 2 \lfloor \frac{q+1}{2} \rfloor \lceil \frac{q+1}{2} \rceil (q-1)^2, && \lambda = O(q^2).
 \end{align*}
\end{Proposition}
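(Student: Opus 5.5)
The parameters $n$ and $d$ come from direct counting. The eigenvalue bound is the real work: I would write the adjacency matrix of $\Gamma_4^X$ as a product of "colored" pieces of the distance-$2$ graph on $X$, and control it with the spectra \eqref{eq:spA2} and \eqref{eq:spA4}. Throughout, write $r=\lfloor (q+1)/2\rfloor$ and $b=\lceil (q+1)/2\rceil$.

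\emph{Order and degree.} By \eqref{eq:pts6} with $s=t=q$ we get $n=|X|=q^5$. For the degree, fix $P\in X$. Every line $M$ through $P$ lies at distance $5$ from $P_\infty$. By Lemma~\ref{lem:path}, $M$ carries exactly one point at distance $4$ from $P_\infty$, and its remaining $q$ points lie at distance $6$; one of these is $P$. A neighbour $Q$ of $P$ is reached by a unique path $P$-$M_P$-$Z$-$M_Q$-$Q$, and I would check the following.
\begin{enumerate}
 \item $Z$ must lie in $X$; otherwise $Q$ would be at distance at most $4$ from $P_\infty$. This leaves $q-1$ choices of $Z$.
 \item The $q+1$ lines through $Z$ realise each colour class $L_i$ exactly once, and $M_P$ is one of them.
 \item On $M_Q$ there are again $q-1$ admissible points $Q\neq Z$ in $X$.
\end{enumerate}
Colour is a property of the line, not of the point from which we view it. Counting red--blue pairs and blue--red pairs then gives
\[
 d = rb(q-1)^2+br(q-1)^2 .
\]

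\emph{Algebraic decomposition.} For each $i$, let $N_i$ be the $X\times\{\text{lines of colour } i\}$ incidence matrix, and set $S_i=N_iN_i^T-I$. Thus $S_i$ is the adjacency matrix of "collinear in $X$ via a line of colour $i$". Put $S_R=\sum_{i \text{ red}} S_i$ and $S_B=\sum_{i\text{ blue}} S_i$. By the uniqueness of shortest paths (Lemma~\ref{lem:path}) and the observation about $Z$ above,
\[
 A(\Gamma_4^X)=S_RS_B+S_BS_R .
\]
Moreover $S:=S_R+S_B$ is the distance-$2$ graph $\Gamma_2$ restricted to $X$, and $S^2$ differs from $\Gamma_4|_X$ only by a combination of $S$, $I$ and diagonal corrections. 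The plan is to show that on $\mathbf 1^\perp$ every eigenvalue of $S$ is $O(q)$. For this I would use interlacing or a quotient argument, comparing $\Gamma_2$ on all points, which has eigenvalues \eqref{eq:spA2}, with the partition by distance from $P_\infty$. The same comparison should give that $\Gamma_4|_X$ has eigenvalues $O(q^2)$ on $\mathbf 1^\perp$, using \eqref{eq:spA4}.

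\emph{Main obstacle.} The identity
\[
 S_RS_B+S_BS_R=\tfrac12\bigl(S^2-(S_R-S_B)^2\bigr)
\]
reduces the problem to controlling $S_R^2+S_B^2$, equivalently $(S_R-S_B)^2$. A naive norm bound $\|S_R\|\|S_B\|\approx q^4$ is far too weak. What I would try first is to exploit the group of order $q^5$ that fixes $P_\infty$ and every line $L_i$, and acts regularly on $X$; this is the unipotent radical of the stabiliser of $P_\infty$ in $G_2(q)$. All the $S_i$ are then Cayley-type matrices over this group, so they can be analysed simultaneously in its irreducible representations. For each nontrivial representation I would bound $\|\sum_i \epsilon_i S_i\|$, with $\epsilon_i=\pm1$ according to colour, by $O(q)$. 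The key point is that, within a representation, the $S_i$ act as $(q-1)$-times projections minus identity onto subspaces in general position. The group structure gives this: the root subgroups behave like translations along the $q+1$ directions at $P_\infty$. With that bound, $(S_R-S_B)^2$ and $S^2$ are both $O(q^2)$ on $\mathbf 1^\perp$, which yields $\lambda=O(q^2)$. If the representation-theoretic bound proves messy, my fallback is a trace or second-moment estimate. This would count closed walks in $A^2$ by the common-neighbour numbers of $\Gamma_4^X$, which are determined by the geometry of $H(q)^D$ and depend only on the relative position of two points of $X$ with respect to $P_\infty$.
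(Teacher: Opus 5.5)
There is a genuine gap, and it sits at the step you call the main obstacle. You assert, but do not prove, the bound $\|\sum_i \epsilon_i S_i\| = O(q)$ on nontrivial representations of the unipotent radical. The phrases ``subspaces in general position'' and ``root subgroups behave like translations'' are never turned into an actual estimate. The fallback does not work either: the second moment $\mathrm{tr}(A^2)=nd$ only gives $\lambda\le\sqrt{nd}$, and higher moments would need counts you do not have. Your dismissal of the naive bound is also misplaced. $\|S_R\|\|S_B\|\approx q^4$ only because of the trivial eigenvector $\mathbf 1$. Since $S_R$ and $S_B$ are regular, $\mathbf 1^\perp$ is invariant, and on $\mathbf 1^\perp$ the naive bound $\|A\|\le 2\|S_R\|\|S_B\|$ is already enough once $\|S_R\|,\|S_B\|=O(q)$ there.

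The missing idea is positivity, which is how the paper argues. Fix $i$. Every point of $X$ lies on exactly one line closest to $L_i$, and two such lines can only meet outside $X$. So $S_i+I$ is a direct sum of $q\times q$ all-ones blocks and is positive semidefinite. Hence $S_R\succeq -rI$ and $S_B\succeq -bI$. On $\mathbf 1^\perp$, interlacing with \eqref{eq:spA2} gives $S_R+S_B=S\preceq(2q-1)I$. Together these give $-rI\preceq S_R\preceq(2q-1+b)I$ on $\mathbf 1^\perp$, and symmetrically for $S_B$. Therefore $\lambda\le 2\|S_R|_{\mathbf 1^\perp}\|\,\|S_B|_{\mathbf 1^\perp}\|=O(q^2)$, with no group theory needed.

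The same observation is what would close your representation-theoretic route. Note that $S_i=qP_i-I$, not $(q-1)P_i-I$. On nontrivial irreducibles, $q\sum_iP_i=S+(q+1)I\preceq 3qI$. Since each $P_i\succeq 0$, this gives $\|\sum_i\epsilon_iP_i\|\le\|\sum_iP_i\|\le 3$ for any signs, with no general-position input.

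A smaller point: your reason why the center $Z$ lies in $X$ is wrong. If $Z$ is the point of $M_P$ at distance $4$ from $P_\infty$, then $Q$ can still lie in $X$. What excludes this case is that all lines through such a $Z$ at distance $5$ from $P_\infty$ are closest to the same $L_i$. So $M_P$ and $M_Q$ have the same colour and $P,Q$ are not adjacent. This is exactly what makes $A=S_RS_B+S_BS_R$ correct, since the products only see centers in $X$. It also shows that your claimed relation between $S^2$ and $\Gamma_4|_X$ is off by the distance-$4$ pairs whose center lies outside $X$.
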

\begin{proof}
	Equation \eqref{eq:pts6} shows $n=q^5$.
	By a similar argument as for Equation \eqref{eq:pts6}, we can determine $d$.
	Now fix a point $P$ at distance $6$ from $P_\infty$.
	Then $P$ is on $\lfloor \frac{q+1}{2} \rfloor$ red lines,
	each with $q-1$ points at distance $6$ from $P_\infty$.
	Each of these $q-1$ points lies on $\lceil \frac{q+1}{2} \rceil$ blue lines,
	each with $q-1$ points at distance $6$ from $P_\infty$.
	By Lemma \ref{lem:path}, no neighbor of $P$ is counted twice.
	This gives
	\[ \lfloor \frac{q+1}{2} \rfloor \lceil \frac{q+1}{2} \rceil (q-1)^2 \]
	neighbors of $P$.
	Interchanging red and blue gives the remaining neighbors of $P$
	and the count is identical.

	Let $A$ denote the adjacency matrix of $\Gamma_4^X$,
	and let $A_j$ denote the adjacency matrix of the distance-$j$ graph of the point-line-incidence graph restricted to $X$.
	(As $X$ is almost all of the points of $H(q)^D$,
	by Cauchy interlacing, this does not affect the asymptotics of eigenvalue estimates.)
	Define $C$ and $D$ on the vertices $P, Q$ as follows:
	\begin{align*}
	 & C_{PQ} = \begin{cases}
			1 & \text{ if $\mathrm{dist}(P,Q) = 2$, and the unique line through $P,Q$ is red,}\\
			0 & \text{ otherwise.}
	     \end{cases}\\
		&
		D_{PQ} = \begin{cases}
			1 & \text{ if $\mathrm{dist}(P,Q) = 2$, and the unique line through $P,Q$ is blue,}\\
			0 & \text{ otherwise.}
	     \end{cases}
	\end{align*}
	The definition of adjacency in $\Gamma_4^X$ immediately implies that
	\begin{align}
	 A = CD + DC. \label{eq:CD}
	\end{align}
	Thus, it suffices to show that 
	the nonprincipal eigenvalues of $C$ and $D$ are in $O(q)$.
	Each line at distance $5$ from $P_\infty$ closest
	to $L_i$ meets other such lines in a point at distance $4$ from $P_\infty$ (if they meet).
	Thus, for a given $L_i$, these lines are pairwise disjoint in $X$.
	Write $C = \sum C_i$, where $C_i$ is the restriction
	of $C$ to the edges on a red line closest to $L_i$.
	Then, by the above, $C_i$ corresponds to a disjoint union of $K_q$'s,
	that is, it is a direct sum of matrices of the form $J-I$.
	Thus, $C_i+I$ and therefore $C+qI$ are positive semidefinite.
	The same calculation shows that $D+qI$ is positive semidefinite.
	Since $A_2 = C+D$ and the nonprincipal eigenvalues of $A_2$ are in $O(q)$ (see \eqref{eq:spA2}),
	the nonprincipal eigenvalues of $C$ (respectively, $D$) are in $O(q)$.
	Now the assertion on $\lambda$ follows from \eqref{eq:CD}.
\end{proof}

\section{No cliques of size 4}

The graph $\Gamma_4$ can have cliques of size $3$ in two ways.
Either they correspond to three points which are all collinear
with a fourth point, or the three points lie on a (simple) hexagon.
For two points $P, Q$ at distance $4$, we call the unique point $C$
at distance $2$ from $P$ and $Q$ the \textit{center} of $P$ and $Q$ (Fig.\ \ref{fig:K4}).
The picture below illustrates both cases.
Note that, as before, this is a picture of points and lines, not of a graph.

\begin{figure}[ht!]
\begin{center}
\begin{tikzpicture}[scale=.85, every node/.style={font=\small}]
\begin{scope}[xshift=-3.5cm]
\coordinate (o) at (0,0);
\coordinate (a) at (90:1.35);
\coordinate (b) at (210:1.35);
\coordinate (c) at (330:1.35);
\draw (o)--(a) (o)--(b) (o)--(c);
\foreach \n in {o,a,b,c} \fill (\n) circle (2pt);
\node[above right] at (o) {$C$};
\node[above] at (a) {$P$};
\node[left] at (b) {$Q$};
\node[right] at (c) {$R$};
\end{scope}
\begin{scope}[xshift=3.5cm]
\foreach \i in {0,...,5} {
 \coordinate (v\i) at ({90+60*\i}:1.35);
 \fill (v\i) circle (2pt);
}
\draw (v0)--(v1)--(v2)--(v3)--(v4)--(v5)--cycle;
\node[above] at (v0) {$P$};
\node[left] at (v2) {$Q$};
\node[right] at (v4) {$R$};
\end{scope}
\end{tikzpicture}
\end{center}

\caption{Case 1 and Case 2 for a $K_3$ in $\Gamma_4$.}
\label{fig:K4}
\end{figure}

In $\Gamma_4^X$, Case 1 is impossible as not all three lines can have distinct colors.
Thus, we only need to rule out Case 2. This is implied by the following fact
which is stated in Remark 3.7.13 in \cite{HVM}.
We call a set of points $P, Q, R, S$ pairwise at distance $4$
a \textit{Kantor configuration} if all six centers of the six pairs are distinct, that is,
no three have a common center (Fig.\ \ref{fig:kantor}).

\begin{figure}[ht!]
\begin{center}
\begin{tikzpicture}[scale=.95, every node/.style={font=\small}]
\coordinate (P) at (0,1.9);
\coordinate (Q) at (-1.65,-.95);
\coordinate (R) at (1.65,-.95);
\coordinate (S) at (0,0);

\coordinate (CPQ) at (-.825,.475);
\coordinate (CPR) at (.825,.475);
\coordinate (CQR) at (0,-.95);
\coordinate (CPS) at (0,.95);
\coordinate (CQS) at (-.825,-.475);
\coordinate (CRS) at (.825,-.475);

\draw (P)--(CPQ)--(Q)
      (P)--(CPR)--(R)
      (Q)--(CQR)--(R)
      (P)--(CPS)--(S)
      (Q)--(CQS)--(S)
      (R)--(CRS)--(S);

\foreach \n in {P,Q,R,S}
  \fill (\n) circle (2pt);

\foreach \n in {CPQ,CPR,CQR,CPS,CQS,CRS}
  \draw[fill=white] (\n) circle (1.8pt);

\node[above] at (P) {$P$};
\node[left] at (Q) {$Q$};
\node[right] at (R) {$R$};
\node[below] at (S) {$S$};
\end{tikzpicture}
\end{center}

\caption{The Kantor configuration.}

\label{fig:kantor}
\end{figure}

\begin{Proposition}[{\cite[Remark 3.7.13]{HVM}}]\label{prop:noKantor}
	Suppose that $q \equiv 2 \pmod{3}$.
	Then $H(q)^D$ does not possess a Kantor configuration (10 points, 12 lines).
\end{Proposition}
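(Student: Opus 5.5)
We plan to work with explicit coordinates for $H(q)^D$, or equivalently, after dualizing, to show that $H(q)$ has no configuration of four lines pairwise at distance $4$ (opposite in the line sense, meeting a common transversal only pairwise) whose six ``center lines'' are distinct. We use the standard model of $H(q)$ on the parabolic quadric $Q(6,q)$. Its points are all points of $Q(6,q)$. Its lines are those lines of $Q(6,q)$ whose Grassmann coordinates satisfy the six linear equations of Tits. In this model, two points of $H(q)$ are at distance $\le 4$ in the incidence graph precisely when they are collinear on the quadric, i.e. orthogonal with respect to the bilinear form $\beta$ of $Q(6,q)$. Dually, for $H(q)^D$ the statement becomes one about lines of $H(q)$. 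We will translate ``$P,Q,R,S$ pairwise at distance $4$ with six distinct centers'' into a statement about a quadruple of hexagon lines $\ell_1,\dots,\ell_4$ of $H(q)$ that pairwise meet a common hexagon line. The six such transversal lines must be distinct, so no three of the $\ell_i$ are concurrent with a common line in the relevant sense.

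First we use the transitivity of $G_2(q)$ to normalize the configuration. $G_2(q)$ acts transitively on ordered pairs of opposite elements and, more strongly, on suitable ordered ``apartment-like'' substructures. Hence we may fix a hexagon (an apartment) containing three of the points, say $P,Q,R$ and their centers, as in Case~2 of Fig.~\ref{fig:K4}. This apartment we put in standard coordinates (the twelve standard points/lines $(1,0,\dots,0)$, etc.). The stabilizer of the apartment together with the torus still acts, and we use it to normalize one or two further coordinates.

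Second, we parametrize the fourth element $S$. It must be at distance $4$ from each of $P,Q,R$, with the three centers $C_{PS},C_{QS},C_{RS}$ distinct from each other and from the three centers on the hexagon. Each condition ``distance $4$ from $P$ via a center not already used'' cuts out explicit polynomial equations in the Tits coordinates of $S$, which are the standard $(a,l,a',l',a'')$ coordinates from \cite{HVM}. We solve these successively: the center $C_{PS}$ is a point collinear with $P$, given by one parameter on each line through $P$, and similarly for $Q$ and $R$. Requiring the three resulting paths to close up at a single point $S$ gives a small polynomial system in a few parameters over $\mathbb{F}_q$.

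The main obstacle is this final elimination. We expect the system to reduce, after using the normalizations, to a single equation of the form $x^2+x+1=0$ (or $x^3=1$ with $x\neq 1$) in one parameter. Such a solution exists in $\mathbb{F}_q$ if and only if $q\equiv 0,1 \pmod 3$. For $q\equiv 2\pmod 3$, $-3$ is a nonsquare, or $3\mid q$ fails and there are no primitive cube roots of unity, so no $S$ exists and there is no Kantor configuration. Degenerate solutions would force two centers to coincide, contradicting the definition; these must be checked and discarded explicitly. If the direct coordinate computation becomes unwieldy, we would instead pass to the embedding in $Q(6,q)$. There, four pairwise orthogonal-type relations give a Gram-matrix condition, and the non-degeneracy plus the Tits trilinear form forces the same cubic-root-of-unity condition.
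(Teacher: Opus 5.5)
\textbf{Verdict: genuine gap.} Your plan is to normalize three of the points onto a standard apartment using $G_2(q)$, then coordinatize the fourth and eliminate. This is the kind of direct check the paper points to; the paper itself only cites Remark~3.7.13 of \cite{HVM}. But the proposal does not carry the check out. The one step with real content is replaced by ``we expect the system to reduce to $x^2+x+1=0$'': no equations are written down and the degenerate cases are postponed.

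The hedge ``(or $x^3=1$ with $x\neq 1$)'' shows the outcome is not under control. In characteristic $3$, $x^2+x+1$ has the root $x=1$, while $x^3=1$, $x\neq 1$ has no solution. So your ``iff $q\equiv 0,1 \pmod 3$'' is false for the second version. The distinction matters because Kantor configurations do exist for $q=3^h$. There $H(q)^D\cong H(q)$, and in $H(q)$ any four points in general position in a totally singular plane $\pi$ of $Q(6,q)$ containing no line of $H(q)$ form one. Indeed, such points are pairwise at distance $4$. Moreover, the plane spanned by the hexagon lines through the center of two of them, $x$ and $y$, meets $\pi$ only in the line $xy$. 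A correct elimination must therefore have a root in characteristic $3$.

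Your $Q(6,q)$ fallback is aimed at the wrong side of the duality. Orthogonality in $Q(6,q)$ encodes distance at most $4$ between \emph{points} of $H(q)$. Your four objects are \emph{lines} of $H(q)$, and two lines at distance $4$ are not orthogonal, since most pairs of their points are opposite. Nor are they ``opposite'' as you wrote; opposite means distance $6$. Kantor configurations exist on the point side for every $q$ (the example above), so no Gram-matrix argument on pairwise orthogonal points can produce the obstruction.

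The dictionary that does work: the singular points of $\ell^\perp$ are exactly the points at distance at most $3$ from a line $\ell$. Hence a line $m$ is at distance at most $4$ from $\ell$ iff $m$ meets $\ell^\perp$. With this, the computation runs as follows.
\begin{itemize}
\item Take $\ell_1,\ell_2,\ell_3$ to be alternate lines of an apartment, and let $\ell_4$ be the fourth line.
\item The girth forces the feet $x_{14}\in\ell_1$ and $x_{24}\in\ell_2$ of the new transversals to lie off the apartment. These two feet are then opposite.
\item So $\ell_4$ is one of the $q$ admissible middle lines of the geodesics joining $x_{14}$ and $x_{24}$, a one-parameter family.
\item The remaining condition $\ell_4\cap\ell_3^\perp\neq\emptyset$ becomes a single quadratic in that parameter.
\end{itemize}
Carrying this out, for instance in Zorn's vector-matrix model of the split octonions, does give $X^2+X+1=0$ for a suitable parameter $X$. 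So your prediction is right. But that computation, together with the girth arguments disposing of the degenerate cases, is the proof, and it is missing from the proposal.
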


Proposition \ref{prop:noKantor} implies that the graph $\Gamma_4^X$ is $K_4$-free.
Together with Proposition \ref{prop:spec},
this concludes the proof of Theorem \ref{thm:main}.

\section{Conclusion}

The key ingredient for our construction is that the dual split Cayley hexagon $H(q)^D$
does not contain a Kantor configuration when $q \equiv 2 \pmod{3}$,
thus mimicking the construction by Mattheus and Verstra\"ete \cite{MattheusVerstraete24}, where the key observation
was that the \textit{Hermitian unital} does not contain an \textit{O'Nan configuration}.
There is a finite list of nice finite geometries and it would be interesting to
identify more such forbidden configurations in them.
In particular, one natural generalization of the Kantor configuration for
generalized octagons is the \textit{L\"owe configuration}, see \cite{Lowe92}.
It would be interesting to know if its nonexistence implies a similarly interesting result.
Note that here, unlike for generalized hexagons, no generalized octagon is known that does not possess it.

\paragraph*{Acknowledgements}
This research was supported by the National Key R\&D Program of China under grant number 2025YFA1017700.
The first author was partially supported by the National Natural Science Foundation of China (12371341)
and by the Fundamental Research Funds for the Central Universities.
We thank Yuval Filmus and Sam Mattheus for their comments on an earlier version of this manuscript.

\appendix

\section{The split Cayley hexagon} \label{app:A}

\textit{Grassmann coordinates} of lines of the projective space $\PG(d, q)$ of dimension $d$ over the finite field
with $q$ elements are defined as follows. Let $L$ be a line. Pick two points $x,y$ on $L$ with coordinates
$x = (x_0, x_1, \ldots, x_d)$ and $y = (y_0, y_1, \ldots, y_d)$. Then
\[
 p_{ij} = \left| \begin{matrix}
           x_i & x_j \\
           y_i & y_j
          \end{matrix} \right|
\]
is, up to scalar multiples, independent of $x,y$.
In $\PG(6, q)$, the points of the \textit{split Cayley hexagon} $H(q)$ satisfy the equation
\[
 X_0X_4 + X_1X_5 + X_2X_6 = X_3^2,
\]
while the lines are those that are contained in the point set and satisfy the equations
\begin{align*}
 & p_{12} = p_{34}, && p_{20} = p_{35}, && p_{01} = p_{36},\\
 & p_{03} = p_{56}, && p_{13} = p_{64}, && p_{23} = p_{45}.
\end{align*}
The dual split Cayley hexagon is obtained by interchanging points and lines.
See \cite[\S3.5.1]{HVM} for a coordinatization of the split Cayley hexagon
that allows Proposition \ref{prop:noKantor} to be verified directly.
Alternatively, there is a model of the split Cayley hexagon for $q \equiv 2 \pmod{3}$
in $\PG(5, q)$ related to a twisted cubic due to Lunardon \cite{Lunardon}, cf.\ \cite[\S3.7.14]{HVM}.

\end{document}